\title{\color{DLtitle}\rmfamily\spacedallcaps{Operads without coalgebras}}
\author{\spacedlowsmallcaps{brice le grignou}\and\spacedlowsmallcaps{damien lejay}}
\date{} 
\newcommand{\tocconfig}{%
	\addtocontents{toc}{\protect\vspace{\beforebibskip}}
	\addcontentsline{toc}{section}{Acknowledgements}
	\addcontentsline{toc}{section}{\refname}
}
\newcommand{\IdentityMap}{\mathrm{id}}
\newcommand{\IdentityFunctor}{\mathrm{Id}}
\newcommand{\SymmetricGroup}[1]{\mathbf{S}_{#1}}
\newcommand{\Cosmos}{\mathfrak C}
\newcommand{\CosmosUnit}{\mathbf 1}
\newcommand{\Insane}{\mathsf{Ins}}
\newcommand{\CategoryOfPointedCogebrasOver}[1]{{#1}\text{-}\mathsf{cog}_\bullet}
\newcommand{\Interval}{\mathbf D^1}
\newcommand{\CategoryOfCogebrasOver}[1]{{#1}\text{-}\mathsf{cog}}
\newcommand{\CategoryOfAlgebrasOver}[1]{{#1}\text{-}\mathsf{alg}}
\newcommand{\Field}[1]{\boldsymbol #1}
\newcommand{\CofreeCogebraFunctor}[1]{\operatorname{L}^{#1}}
\newcommand{\Insertion}{\mathbin{\vartriangleleft}}
\newcommand{\CategoryOfComodulesOver}[1]{#1\text{-}\mathsf{comod}}
\newcommand{\CategoryOfModulesOver}[1]{#1\text{-}\mathsf{mod}}
\newcommand{\CofreeCogebraComonad}[1]{\mathbf{L}^{#1}}
\newcommand{\IsAdjointTo}{\mathrel{\dashv}}
\newcommand{\Forget}{\operatorname{Forget}}
\newcommand{\Free}{\operatorname{Free}}
\newcommand{\Cofree}{\operatorname{Cofree}}
\newcommand{\AdditiveCosmos}{\mathfrak A}
\newcommand{\CategoryOfChainComplexesIn}[1]{\operatorname{Ch} (#1)}
\newcommand{\InternalHom}[2]{\mleft[#1, #2\mright]}
\newcommand{\CoendomorphismOperad}[1]{\operatorname{Coend}_{#1}}
\newcommand{\EndomorphismOperad}[1]{\operatorname{End}_{#1}}
\newcommand{\Dual}{^\vee}
\newcommand{\UnitalAssociativeOperad}{\mathsf{uA}}
\newcommand{\RightInverse}{\sigma}
\newcommand{\CategoryOfVectorSpacesOver}[1]{\mathsf{Vect}_{#1}}
\begin{document}


\maketitle


\begin{abstract}
	We give an example of a non-trivial linear operad
	that only admits trivial coalgebras
	and give sufficient conditions
	ensuring that the cofree coalgebra functor be faithful.
\end{abstract}


\section{Introduction}

In the same way that operads are used to encode algebraic data, they
can equally be used for coalgebraic data. For example, the operad
governing associative algebras controls at the same time coassociative
coalgebras:
one operation \( \mu \) of arity \( 2 \) versus one cooperation
\( \delta \) of (co)-arity \( 2 \),
\[
	\mu \circ (\mu \otimes \IdentityMap)
	= \mu \circ (\IdentityMap \otimes \mu)
	\quad \text{versus}
	\quad (\delta \otimes \IdentityMap) \circ \delta
	= (\IdentityMap \otimes \delta) \circ \delta.
\]
As we shall see, the theory of coalgebras over an operad is not always
as gentle as that of algebras.

The word `operad' was carefully crafted by Peter May as a
portmanteau of `operations' and `monad'
\cite{doi:10.1090/conm/202/02588}.
Indeed, to every operad \( P \) in a cocomplete closed symmetric
monoidal
category \( (\Cosmos, \otimes, \CosmosUnit) \) is associated a monad
\( \widetilde P \) with underlying functor
\[
	X \longmapsto P \Insertion X
	\coloneqq \coprod_{n \in \Naturals} P (n)
	\otimes_{\SymmetricGroup n} X^{\otimes n}.
\]
The category of \( P \)-algebras is then the category of
\( \widetilde P \)-modules
\[
	\CategoryOfAlgebrasOver P
	\IsNaturallyIsomorphicTo \CategoryOfModulesOver {\widetilde P}.
\]
In many cases the assignment \( P \mapsto \widetilde P \) is faithful
(for example, operads in sets or operads in vector spaces
\cite{doi:10.1007/bfb0072514}).
In particular, \( P \) is the trivial operad if and only if
\( \widetilde P \) is the trivial monad. Monads arising from operads
are also well-behaved: when the unit \( \CosmosUnit \to P \) is
a split monomorphism, the free \( P \)-algebra functor is always
faithful, that is the unit of the adjunction
\( \Free \IsAdjointTo \Forget \)
\[
	X \longrightarrow P \Insertion X
\]
is a monomorphism for every \( X \in \Cosmos \).

This is no longer the case for coalgebras over \( P \). For one, the
forgetful functor
\( \CategoryOfCogebrasOver P \to \Cosmos \)
may not have a right adjoint, that is, there may not exist cofree
\( P \)\=/coalgebras. When it does, the category of \( P \)-coalgebras
is then equivalent to the category of comodules over a comonad
\[
	\CategoryOfCogebrasOver P
	\IsNaturallyIsomorphicTo \CategoryOfComodulesOver
	{\CofreeCogebraComonad P}
\]
but the functor \( P \mapsto \CofreeCogebraComonad P \)
may no longer be faithful. In what follows we give an example
of a non-trivial linear operad \( \Insane \) whose associated
comonad is the zero comonad
\[
	\Insane \neq 0 \qand \CofreeCogebraComonad \Insane = 0.
\]
In other terms, all \( \Insane \)-coalgebras are trivial.
In the last section, we give sufficient
conditions in the linear setting on operads \( P \) so that the
cofree coalgebra functor be faithful, i.e.\ so that the counit of
the comonad
\[
	\CofreeCogebraFunctor P X \longrightarrow X
\]
be an epimorphism for every \( X \).

\section{Coalgebras over an operad}

In a closed symmetric monoidal category with tensor product
\( \otimes \) and internal hom \( [-,-] \), a coalgebra over an operad
\( P \) is the data of an object \( V \) together with
a morphism of operads \( a \From P \to \CoendomorphismOperad V \)
where \( \CoendomorphismOperad V \) is the coendomorphism operad
of \( V \) given by
\[
	\CoendomorphismOperad V (n)
	\coloneqq \InternalHom V {V^{\otimes n}},
\]
with obvious right \( \SymmetricGroup n \)-action and compositions.
Similarly,
a \( P \)\=/coalgebra structure on \( V \) is the data of maps
\( P(n) \otimes V \to V^{\otimes n} \) suitably associative,
unital and equivariant.

In the cartesian case, the coendomorphism operad simplifies to
\[
	\CoendomorphismOperad V (n) = \InternalHom V {V^{n}}
	\IsNaturallyIsomorphicTo {\InternalHom V V}^n
	\IsNaturallyIsomorphicTo {\CoendomorphismOperad V (1)}^n.
\]
This trivialises the theory of \( P \)-coalgebras in cartesian
categories as is exemplified by the well-known fact that each set has
a unique coassociative counital coalgebra structure, the diagonal.
For this reason, we shall focus our attention on the additive case and
fix
a closed symmetric monoidal cocomplete additive category
\( (\AdditiveCosmos, \otimes, \CosmosUnit) \).
By a dg-operad, we shall mean an operad in the symmetric monoidal
category of chain complexes
\( (\CategoryOfChainComplexesIn \AdditiveCosmos,
\otimes, \CosmosUnit) \).

We shall also assume that cofree coalgebras exist:
for every morphism of dg-operads
\( f \From P \to P' \),
we require that the forgetful functor
\[
	\begin{tikzcd}
	\CategoryOfCogebrasOver {P'}
	\rar["f^\ast"]
		& \CategoryOfCogebrasOver P
	\end{tikzcd}
\]
admit a right adjoint \( f_\ast \).
In particular, for every dg-operad \( P \),
the unit morphism \( \CosmosUnit \to P \) yields the adjunction
\[
	\begin{tikzcd}[column sep = huge]
		\CategoryOfCogebrasOver P
		\arrow[r, shift left=2,"\Forget"]
			& \CategoryOfChainComplexesIn \AdditiveCosmos.
			\arrow[l, shift left=2, "\Cofree"]
	\end{tikzcd}
\]
This is the case for example when
\( \AdditiveCosmos \) is the category of vector spaces over a field,
where cofree coalgebras can be computed for every dg-operad
\cite{arXiv:1409.4688}.
Thanks to this assumption, the category of \( P \)-coalgebras becomes
comonadic over the category of chain complexes: the category of
\( P \)\=/coalgebras is equivalent to the category of comodules over
the comonad \( \CofreeCogebraComonad P \) coming from the
\( \Forget \IsAdjointTo \Cofree \) adjunction.
Its underlying functor is
\( \CofreeCogebraFunctor P = \Forget \circ \Cofree \).
In addition, for every morphism \( f \From P \to P' \), the
counit \( f^\ast f_\ast \to \IdentityFunctor \)
induces a morphism of comonads
(i.e.\ a morphism of comonoids in the category of endofunctors)
\[
	\CofreeCogebraComonad f
	\From \CofreeCogebraComonad {P'}
	\longrightarrow \CofreeCogebraComonad P
\]
on \( \CategoryOfChainComplexesIn \AdditiveCosmos \).
In other words we assume the existence of a
contravariant functor from the category of dg-operads to the category
of comonads.

\section{An operad without coalgebras}%
\label{sec:example_insane}

We shall build here an example of an operad without coalgebras
in the category of vector spaces over a field \( \Field K \)
by adding to the unital associative operad
\( \UnitalAssociativeOperad \)
an infinite number of operations of arity \( 0 \),
the idea being that a locally finite dimensional coalgebra
cannot support an infinite number of independent linear forms.

Let \( \Insane \) be the operad in
\( \CategoryOfVectorSpacesOver {\Field K} \)
whose algebras are the \( \UnitalAssociativeOperad \)-algebras
\( (\Lambda, \mu, \upsilon_0) \),
endowed with other arity zero operations
\( \upsilon_n \From \Field K \to \Lambda \)
for every \( n \geq 1 \) and an arity \( 1 \) operation \( I_\lambda \)
for every non-zero finitely supported sequence \( \lambda \)
of elements of \( \Field K \) with relation
\[
	\mu \circ \left(\sum_{n\in\Naturals}
	\lambda_n \upsilon_n \otimes \IdentityMap_\Lambda\right)
	\circ I_\lambda = \IdentityMap_\Lambda.
\]

The first thing to check is that \( \Insane \) is not a trivial operad.
For this one only needs to exhibit a non-trivial \( \Insane \)-algebra.
Let \( \Field L/\Field K \) be a field extension of infinite dimension
and let \( \upsilon_0 \coloneqq 1, \upsilon_1, \upsilon_2, \dots \) be a
countable family of elements of \( \Field L \) linearly independent over
\( \Field K \).
For every \( \lambda \),
the sum \( \sum_n \lambda_n \upsilon_n \) is invertible,
let \( I_\lambda \) denote the multiplication by its inverse.
Then \( (\Field L, \times , \upsilon_0, \upsilon_1, \upsilon_2, \dots,
I_\lambda) \) is a non-zero \( \Insane \)-algebra.

The coalgebras over \( \Insane \)
are the \( \UnitalAssociativeOperad \)-coalgebras
\( (V, \delta, \varepsilon_0) \)
endowed with other cooperations \( \varepsilon_n \)
and \( I_\lambda \) such that
\[
	I_\lambda \circ
	\left(\sum_{n \in \Naturals} \lambda_n \varepsilon_n
	\otimes \IdentityMap_V\right) \circ \delta = \IdentityMap_V.
\]
Any such coalgebra has to be trivial.
Indeed,
since \( V \) is in particular a
\( \UnitalAssociativeOperad \)\=/coalgebra,
every element \( y\in V \) generates
a finite dimensional subcoalgebra \( V_y \subset V \).
Since \( V_y \) is stable by \( \delta \),
it follows that for every \( \lambda \),
\( V_y \) is also stable by
\[
	\RightInverse_\lambda \coloneqq
	\sum_{n \in \Naturals} (\lambda_n \varepsilon_n \otimes
	\IdentityMap_V) \circ \delta.
\]
As \( \RightInverse_\lambda \) is injective on \( V \) and \( V_y \)
is finite dimensional,
we conclude that it restricts to an automorphism of \( V_y \).
As a consequence either \( V_y \IsNaturallyIsomorphicTo 0 \)
or the linear forms \( (\varepsilon_0, \varepsilon_1, \dots) \)
are independent on \( V_y \),
which would contradict the finite dimensionality of \( V_y \).
As \( V_y \IsNaturallyIsomorphicTo 0 \) only when \( y = 0 \),
we conclude that \( V \IsNaturallyIsomorphicTo 0 \).

\section{Sane operads}

\begin{definition}
	We shall say that a dg-operad \( P \) is \emph{sane} if the
	cofree \( P \)-coalgebra functor is faithful or equivalently if
	the counit of the adjunction
	\( \Forget \IsAdjointTo \Cofree \)
	\[
		\CofreeCogebraFunctor P X \longrightarrow X,
	\]
	is an epimorphism for every chain complex \( X \).
\end{definition}

Our goal is to prove that wide classes of dg-operads are sane.
The main tool for that is the propagation lemma.

\begin{lemma}[(Propagation)]%
	\label{thm:propagation}
	Let \( f \From P \to P' \) be a morphism of dg-operads.
	If \( P' \) is sane, so is~\( P \).
\end{lemma}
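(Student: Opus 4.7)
The strategy is to exploit the morphism of comonads \( \CofreeCogebraComonad f \From \CofreeCogebraComonad {P'} \to \CofreeCogebraComonad P \) associated to~\( f \), whose existence was recorded at the end of the previous section. The key input is its compatibility with the counits: for every chain complex~\( X \), the component \( \CofreeCogebraComonad f _X \) fits into the identity
\[
	\varepsilon^P_X \circ \CofreeCogebraComonad f _X = \varepsilon^{P'}_X
	\From \CofreeCogebraFunctor {P'} X \longrightarrow X,
\]
where \( \varepsilon^P \) and \( \varepsilon^{P'} \) denote the counits of the two comonads.

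Granted this identity, the conclusion is formal: if \( P' \) is sane, then \( \varepsilon^{P'}_X \) is an epimorphism for every \( X \); the displayed equation presents it as \( \CofreeCogebraComonad f _X \) followed by \( \varepsilon^P_X \), and the right-cancellation property of epimorphisms forces the second factor \( \varepsilon^P_X \) to be an epimorphism as well. Hence \( P \) is sane.

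The only step that deserves attention is therefore the displayed identity, and this is really built into the definition of a morphism of comonads. Should one prefer to check it by hand, it is enough to unwind the construction of \( \CofreeCogebraComonad f _X \) as the image under \( \Forget \) of the transpose of \( \varepsilon^{P'}_X \) (viewed as a map of underlying chain complexes) under the adjunction \( \Forget \IsAdjointTo \Cofree \) for~\( P \); composing with \( \varepsilon^P_X \) then recovers \( \varepsilon^{P'}_X \) by one of the triangle identities. Consequently there is no genuine obstacle in this lemma — the work has been done upstream in producing the contravariant assignment \( P \mapsto \CofreeCogebraComonad P \).
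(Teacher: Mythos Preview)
Your argument is correct and is essentially the same as the paper's: both observe that the counit \( \CofreeCogebraFunctor{P'} X \to X \) factors as \( \CofreeCogebraFunctor{P'} X \xrightarrow{\CofreeCogebraFunctor f} \CofreeCogebraFunctor P X \to X \), so if the composite is an epimorphism then the second map is. The paper states this in two lines without invoking the comonad-morphism language explicitly, but the content is identical.
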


\begin{proof}
	Given a chain complex \( X \), 
	the counit map \( \CofreeCogebraFunctor{P'} X \to X \) factors
	through the counit map \( \CofreeCogebraFunctor P X \to X \) as
	\[
		\begin{tikzcd}
			\CofreeCogebraFunctor{P'} X
			\rar["\CofreeCogebraFunctor f"]
				& \CofreeCogebraFunctor P X \rar
					& X.
		\end{tikzcd}
	\]
	If the composite is epimorphic, so is the second map.
\end{proof}

Let us say that a dg-operad \( P \) is \emph{cofibrant} if for every
given morphism \( Q \to R \) of dg-operads that is both a degree-wise
epimorphism and a quasi-isomorphism, every map \( P \to R \) admits a
lift
\[
	\begin{tikzcd}
			& Q
			\arrow[d, two heads]
			\ar[d, phantom, shift left=1.5, near start,
			"\rotatebox{90}{\( \!\!\!\!\sim \)}"] \\
		P \ar[ru, dotted]
		\arrow[r, "", swap]
			& R.
	\end{tikzcd}
\]
In particular, a cofibrant dg-operad must lift against
\( \Interval \to 0 \), where \( \Interval \) is the
reduced cellular model of the interval
\[
\begin{tikzcd}
	{\cdots}
	\arrow[r]
	& 0
	\arrow[r]
	& \CosmosUnit
	\arrow[r,"\IdentityMap"]
	& \CosmosUnit
	\arrow[r] 
	& 0
	\arrow[r]
	& \cdots
\end{tikzcd}
\]
equipped with its canonical dg-algebra structure.

For a chain complex \( X \), let \( X\Dual \coloneqq \InternalHom X
\CosmosUnit \).
Recall that for every triple \( A, B, C \) of object in
a symmetric closed monoidal category, one has a canonical morphism
\( [A, B] \otimes C \to [A, B \otimes C] \) obtained via adjunctions.
One says that \( X \) is \emph{dualisable} when the canonical map
\[
	X \otimes X\Dual \longrightarrow \InternalHom X X
\]
is an isomorphism. As an example, a
chain complex of vector spaces is dualisable if and only if it is
(bounded and) finite dimensional.
When \( X \) is dualisable, there is given
a canonical unit map \( \CosmosUnit \to X \otimes X\Dual \) and one has
the familiar
\[
	{(X \otimes X)}\Dual
	\IsNaturallyIsomorphicTo X\Dual \otimes X\Dual
	\qand {\left(X\Dual\right)}\Dual \IsNaturallyIsomorphicTo X.
\]

\begin{theorem}[(Sanity check)]%
	\label{thm:sanity_check_one}
	A dg-operad \( P \) is sane when either
	\begin{itemize}
		\item
			\( P \) admits an augmentation
			\( P \to \CosmosUnit \);
		\item
			\( P \) is cofibrant;
		\item
			or \( P(0) \) is dualisable
			and the unit \( \CosmosUnit \to P \)
			is a degree-wise split monomorphism.
	\end{itemize}
\end{theorem}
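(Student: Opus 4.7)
My plan is to handle all three cases uniformly via the propagation lemma (Lemma~\ref{thm:propagation}): in each situation I will exhibit a morphism from $P$ to a dg-operad that is visibly sane. The baseline candidate is the unit dg-operad $\CosmosUnit$ itself. A coalgebra over $\CosmosUnit$ is just a chain complex, so the forgetful functor is an equivalence and $\CofreeCogebraFunctor{\CosmosUnit}$ is the identity functor; its counit is then the identity, hence trivially an epimorphism. Thus $\CosmosUnit$ is sane, and case (i) follows immediately by applying propagation to the augmentation $P \to \CosmosUnit$.

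For case (ii), I would argue that any cofibrant dg-operad $P$ admits an augmentation, reducing the statement to case (i). The key input is the cofibrancy lifting property recalled just before the theorem: since $\Interval$ is acyclic and concentrated in degrees $0$ and $1$, the map $\Interval \to 0$ is both degree-wise epimorphic and a quasi-isomorphism, so any map $P \to 0$ lifts to a map $P \to \Interval$. Interpreting $\Interval$ as a path-object-type construction in dg-operads and extracting an endpoint should yield the augmentation. The delicate step here is to carry out this homotopical manipulation inside the category of dg-operads rather than merely in chain complexes, but the lifting property we are handed is precisely what makes this work.

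For case (iii), my goal is again to construct an augmentation $P \to \CosmosUnit$. A degree-wise split monomorphism $\CosmosUnit \hookrightarrow P$ yields, in every arity, a chain-level retraction $P(n) \to \CosmosUnit$; however, these retractions will almost never assemble into an operad morphism on their own, because operadic composition can plug arity-$0$ operations into higher arities in nontrivial ways. This is exactly the pathology exploited by the $\Insane$ example of the previous section, where arity-$0$ generators proliferate without bound. The role of the dualisability hypothesis on $P(0)$ is to block this pathology: dualisability supplies a coevaluation $\CosmosUnit \to P(0) \otimes P(0)\Dual$ and an evaluation map, letting one manipulate the finite amount of arity-$0$ data coherently. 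My plan is to combine this dualisability data with the splitting of $\CosmosUnit \to P(1)$ to rigidify the family of retractions into a genuine operad map $P \to \CosmosUnit$, and then invoke propagation and case (i).

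The main obstacle I foresee is precisely this rigidification step in case (iii): verifying that the proposed augmentation respects the operadic composition maps in every arity. This will require careful bookkeeping of how arity-$0$ operations are inserted into higher-arity ones, using the coevaluation on $P(0)$ to compensate for the fact that the chain-level retractions need not be compatible with the structure maps of $P$. The first two cases, by contrast, I expect to be essentially formal once the right target operad is identified.
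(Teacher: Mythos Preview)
Your treatment of case~(i) is correct and matches the paper. Case~(ii) has a fixable error, and case~(iii) has a genuine gap.

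\textbf{Case (ii).} The dg-algebra \( \Interval \) does \emph{not} admit an augmentation to \( \CosmosUnit \): any chain map \( \Interval \to \CosmosUnit \) must send the degree-\(1\) generator \(h\) to zero, but \(dh\) equals the unit \(e\), which must go to \(1\); this forces \(0 = 1\). So you cannot ``extract an endpoint'' and reduce to case~(i). The repair is immediate: \( \Interval \) is itself sane, because it is concentrated in arity~\(1\), the cofree \( \Interval \)-coalgebra on \(X\) is \( \InternalHom{\Interval}{X} \), and the counit is evaluation at the (degree-wise split) unit \( \CosmosUnit \hookrightarrow \Interval \), hence a split epimorphism. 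Then propagation along the lift \( P \to \Interval \) finishes the argument, exactly as the paper does.

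\textbf{Case (iii).} Your strategy of manufacturing an augmentation \( P \to \CosmosUnit \) cannot succeed. Take \( P = \UnitalAssociativeOperad \): here \( P(0) = \Field K \) is dualisable and \( \CosmosUnit \to P \) is a split monomorphism, yet \( \UnitalAssociativeOperad \) admits no augmentation. Indeed, the operadic composition \( \UnitalAssociativeOperad(2) \otimes \UnitalAssociativeOperad(1) \otimes \UnitalAssociativeOperad(0) \to \UnitalAssociativeOperad(1) \) sends \( \mu \otimes \IdentityMap \otimes \upsilon_0 \) to \( \IdentityMap \); under any putative augmentation the source lands in \(0\) while the target maps to \(1\). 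So no amount of bookkeeping with coevaluation maps will rigidify your retractions into an operad map. The paper proceeds entirely differently: it first shows that any \emph{reduced} operad with split-mono unit is sane (again via the arity-\(1\) cofree formula \( \InternalHom{P(1)}{X} \)), and then, using dualisability of \( P(0) \), identifies \( \overline P \)-coalgebras with \emph{pointed} \( P \)-coalgebras to obtain \( \CofreeCogebraFunctor P X \cong \CofreeCogebraFunctor{\overline P} X \oplus P(0)\Dual \), from which sanity of \( P \) follows from sanity of \( \overline P \).
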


\begin{remark}
	We let to the reader the treat of proving that for operads in
	\( \CategoryOfVectorSpacesOver {\Field K} \)
	the situation is binary:
	given a \( \Field K \)-linear operad \( P \),
	either all \( P \)\=/coalgebras are zero or for
	every vector space \( X \),
	the counit map \( \CofreeCogebraFunctor P X \to X \)
	is surjective.
\end{remark}

The first two cases follow by propagation since both the unit operad and
\( \Interval \) are sane. We shall focus on the last case.
Let us recall that a dg-operad \( P \) is \emph{reduced} when
\( P(0) = 0 \). Every dg\=/operad \( P \) admits a maximal reduced
suboperad, that we shall denote by \( \overline P \).

\begin{lemma}
	If \( P \) is a reduced dg-operad and the unit
	\( \CosmosUnit \to P \) is a degree-wise split monomorphism,
	then \( P \) is sane.
\end{lemma}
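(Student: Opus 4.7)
My plan is to produce, for each chain complex \(X\), a \(P\)-coalgebra \(V\) together with an epimorphism of chain complexes \(V \to X\). The universal property of the cofree functor then provides a coalgebra map \(V \to \CofreeCogebraFunctor P X\) whose composition with the counit recovers the epimorphism \(V \to X\); since this composition is epic, so is the counit.

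I would take \(V \coloneqq \InternalHom{P(1)}{X}\) and endow it with the \(P\)-coalgebra structure defined as follows. At arity one, the structure map \(P(1) \otimes V \to V\) is the left action of the dg-algebra \(P(1)\) on \(V\) induced by right multiplication of \(P(1)\) on itself, namely the chain map adjoint to
\[
	P(1) \otimes \InternalHom{P(1)}{X} \otimes P(1)
	\longrightarrow \InternalHom{P(1)}{X} \otimes P(1)
	\longrightarrow X
\]
built from the symmetry, the multiplication of \(P(1)\), and evaluation. At higher arities \(n \geq 2\), the structure maps \(P(n) \otimes V \to V^{\otimes n}\) are set to zero; no arity zero structure is needed because \(P\) is reduced. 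The unit axiom then reduces to that of \(P(1)\), coassociativity at \((1,1)\) reduces to associativity of the multiplication of \(P(1)\), and the remaining coassociativity constraints hold trivially: both sides vanish, since the operadic composition \(\mu_i \From P(n) \otimes P(m) \to P(n+m-1)\) lands in arity one only when \(n = m = 1\).

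For the chain map \(V \to X\), I would take \(\pi \coloneqq \eta^\ast\), precomposition with the operad unit \(\eta \From \CosmosUnit \to P(1)\); this is a chain map because \(\eta\) is. To show that \(\pi\) is an epimorphism, I would pick a degree-wise retraction \(r \From P(1) \to \CosmosUnit\) of \(\eta\) furnished by the hypothesis. Although \(r\) need not commute with the differential, for every degree \(n\) and every element \(x \From \CosmosUnit \to X\) of degree \(n\), the graded composite \(x \circ r \From P(1) \to X\) is an element of \(V_n\) that \(\pi\) sends to \(x \circ r \circ \eta = x\). This exhibits \(\pi\) as a degree-wise split epimorphism, which suffices for \(\pi\) to be an epimorphism in \(\CategoryOfChainComplexesIn\AdditiveCosmos\). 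The key subtlety is that \(r\) need not be a chain map; the construction sidesteps this because \(r\) intervenes only to exhibit preimages, while both the coalgebra \(V\) and the map \(\pi\) depend only on the chain map \(\eta\) and on the dg-algebra structure of \(P(1)\).
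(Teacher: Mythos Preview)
Your proof is correct and follows essentially the same route as the paper's. The paper invokes the Propagation Lemma along the retraction \(P \to P(1)\) (available because \(P\) is reduced) and then identifies \(\InternalHom{P(1)}{X}\) as the cofree \(P(1)\)-coalgebra with counit \(\eta^\ast\); your construction of a \(P\)-coalgebra structure on \(\InternalHom{P(1)}{X}\) with zero maps in arities \(\geq 2\) is precisely the pullback of that \(P(1)\)-coalgebra along the same retraction, and your factorisation through \(\CofreeCogebraFunctor{P}X\) via the universal property is the Propagation argument unwound in this special case.
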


\begin{proof}
	Since \( P \) is reduced, the inclusion
	\( P(1) \subset P \) splits in the category of
	dg-operads. By propagation one is sane whenever the other is.
	Since \( P(1) \) is concentrated in arity \( 1 \),
	for every chain complex \( X \),
	the cofree \( P(1) \)-coalgebra generated by \( X \) is given by
	\( \InternalHom {P(1)} X \) with coalgebra structure
	\[
		\InternalHom {P(1)} X \longrightarrow
		\InternalHom {P(1)} {\InternalHom {P(1)} X}
		\IsNaturallyIsomorphicTo
		\InternalHom {P(1) \otimes P(1)} X
	\]
	induced by the multiplication map \( P(1) \otimes P(1) \to
	P(1) \).
	The assumption that \( \CosmosUnit \to P \) is a
	degree-wise split monomorphism ensures that for every \( X \),
	the counit \( \InternalHom {P(1)} X\to X \)
	is a degree-wise split epimorphism, so \( P(1) \) is sane.
\end{proof}

We are thus left to prove that in the case where
\( \overline P \) is sane and \( P(0) \) is dualisable, \( P \)
is sane. For this we shall investigate the adjunction between
coalgebras over \( P \) and coalgebras over \( \overline P \).

For every dg-operad, the chain complex \( P(0) \) is the
initial \( P \)-algebra. When \( P(0) \) is dualisable, one has
a canonical isomorphism of operads
\[
	\EndomorphismOperad{P(0)}
	\IsNaturallyIsomorphicTo \CoendomorphismOperad{P(0)\Dual}.
\]
The \( P \)-algebra structure of \( P(0) \) composed with the
above isomorphism
\[
	 P \longrightarrow \EndomorphismOperad{P(0)}
	 \IsNaturallyIsomorphicTo \CoendomorphismOperad{P(0)\Dual} 
\]
gives a \( P \)-coalgebra structure on \( P(0)\Dual \).
This is the terminal \( P \)-coalgebra.

\begin{definition}
	A pointed \( P \)-coalgebra is the data of a
	\( P \)\=/coalgebra \( V \) together with a morphism of
	\( P \)-coalgebras \( P(0)\Dual \to V \), called a pointing.
	A morphism of pointed
	\( P \)-coalgebras is one that respects the pointing maps.
	Let \( \CategoryOfPointedCogebrasOver P \)
	denote the category of pointed \( P \)-coalgebras.
\end{definition}

\begin{lemma}
	The category of pointed \( P \)-coalgebras
	is equivalent to the category of \( \overline P \)-coalgebras.
\end{lemma}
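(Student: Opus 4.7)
The plan is to build mutually quasi-inverse functors
\[
F \From \CategoryOfPointedCogebrasOver P
\longrightarrow \CategoryOfCogebrasOver{\overline P},
\qquad
G \From \CategoryOfCogebrasOver{\overline P}
\longrightarrow \CategoryOfPointedCogebrasOver P,
\]
modelled on the classical equivalence between pointed counital cocommutative coalgebras and non\=/counital ones. Since \( P(0)\Dual \) is terminal in \( \CategoryOfCogebrasOver P \), the canonical map \( \pi \From V \to P(0)\Dual \) arising from the arity-\( 0 \) coaction of any pointed \( P \)\=/coalgebra \( (V, \eta) \) is a retraction of \( \eta \), producing a splitting \( V \IsNaturallyIsomorphicTo P(0)\Dual \oplus W \) with \( W \coloneqq \ker \pi \). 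I set \( F(V, \eta) \coloneqq W \), equipped with the \( \overline P \)\=/coalgebra structure whose arity\=/\( n \) coaction (\( n \geq 1 \)) is obtained by restricting the \( P \)\=/coaction on \( V \) to \( P(n) \otimes W \) and then projecting \( V^{\otimes n} \) onto its \( W^{\otimes n} \) summand along the natural decomposition
\[
V^{\otimes n} \IsNaturallyIsomorphicTo \bigoplus_{S \subseteq \{1, \dots, n\}}
W^{\otimes S} \otimes (P(0)\Dual)^{\otimes S^c}.
\]

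In the other direction, \( G(W) \coloneqq (P(0)\Dual \oplus W, \eta) \) with \( \eta \) the inclusion and arity\=/\( 0 \) coaction the projection onto \( P(0)\Dual \). For \( n \geq 1 \), the component of \( \rho_n(\mu, w) \in V^{\otimes n} \) along the \( S \)\=/summand above (for \( w \in W \)) is defined, using the dualisability of \( P(0) \), as the composite
\[
P(n) \otimes P(0)^{\otimes S^c} \otimes W
\longrightarrow P(|S|) \otimes W
\longrightarrow W^{\otimes S},
\]
where the first arrow is the operadic partial composition plugging each \( P(0) \)\=/factor into the corresponding position in \( S^c \), and the second is the \( \overline P \)\=/coaction \( \bar\rho_{|S|} \). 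On the \( P(0)\Dual \)\=/summand of \( V \), \( \rho_n \) is the terminal \( P \)\=/coalgebra structure of \( P(0)\Dual \) composed with \( \eta \).

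That \( F \circ G \IsNaturallyIsomorphicTo \IdentityFunctor \) is nearly immediate: projecting the coaction on \( G(W) \) to the \( W^{\otimes n} \)\=/summand selects the \( S = \{1, \dots, n\} \) component, which by construction is \( \bar\rho_n \). The main obstacle is twofold: checking that \( G(W) \) actually satisfies the \( P \)\=/coalgebra axioms (\emph{i.e.}\ that the \( \rho_n \) are compatible with the operadic composition in \( P \)), and establishing the converse isomorphism \( G \circ F \IsNaturallyIsomorphicTo \IdentityFunctor \), which demands showing that the subset\=/indexed formula above is not a choice but is \emph{forced}: for any pointed \( (V, \eta) \), the \( S \)\=/component of the original \( P \)\=/coaction on \( W \) must equal the composite just prescribed. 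Both verifications reduce to iterated application of the single \( P \)\=/coalgebra axiom relating \( \rho_n \) to \( \rho_0 \) through the partial compositions \( P(n) \otimes P(0) \to P(n-1) \), combined with the terminality of \( P(0)\Dual \) in \( \CategoryOfCogebrasOver P \).
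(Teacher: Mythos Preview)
Your approach is correct and matches the paper's almost exactly: both build the equivalence via \( V \mapsto \overline V \) and \( \overline V \mapsto \overline V \oplus P(0)\Dual \), with the same formula for the \( P \)\=/coaction on the direct sum obtained by plugging \( P(0) \)\=/factors into the slots indexed by \( S^c \) using dualisability. The only cosmetic difference is that the paper takes \( \overline V \) to be the \emph{cokernel} of the pointing \( \eta \), whereas you take the \emph{kernel} of the terminal map \( \pi \); your explicit observation that \( \pi \circ \eta = \IdentityMap \) by terminality shows these agree, and in fact supplies the splitting the paper tacitly relies on when asserting the two constructions are mutually inverse.
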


\begin{proof}
	For any pointed \( P \)-coalgebra \( V \),
	the cokernel \( \overline V \) of the
	pointing map \( P(0)\Dual \to V \) admits a canonical
	\( \overline P \)-coalgebra structure.
	
	Conversely,
	given a \( \overline P \)-coalgebra \( \overline V \),
	the chain complex \( \overline V \oplus P(0)\Dual \)
	can be endowed with a canonical \( P \)-coalgebra structure.
	Its restriction to \( P(0)\Dual \) is simply
	the \( P \)-coalgebra structure of \( P(0)\Dual \).
	We now describe its restriction to \( \overline V \).
	Let \( n \geq 0 \),
	the data of an \( \SymmetricGroup n \)\=/equivariant map
	\( P(n) \otimes \overline V \to (\overline V \oplus
	P(0)\Dual)^{\otimes n} \) is equivalent to the data of
	\( \SymmetricGroup p \times \SymmetricGroup q \)\=/
	equivariant maps
	\( P(n) \otimes \overline V \to
	\overline V{}^{\otimes p}
	\otimes (P(0)\Dual)^{\otimes q} \)
	for each decomposition \( p+q=n \).
	
	This structure map is 
	zero if \( p = 0 \), otherwise it is given by the composition
	\[
		\begin{tikzcd}
			\overline V \otimes P(n)
			\arrow[d,
			"\text{ unit of dualisable }{P(0)}^{\otimes q}"]
			\\
			\overline V \otimes P (n) \otimes
			{P(0)}^{\otimes q}
			\otimes {(P(0)\Dual)}^{\otimes q}
			\arrow[d,
			"\, P(n) \otimes \CosmosUnit^{\otimes p}
			\otimes {P(0)}^{\otimes q} \to P(p)"]
			\\
			\overline V \otimes P (p) 
			\otimes {(P(0)\Dual)}^{\otimes q}
			\arrow[d,
			"\text{ coalgebra structure of } \overline V"]
			\\
			\overline V^{\otimes p}
			\otimes {(P(0)\Dual)}^{\otimes q}
		\end{tikzcd}
	\]
	A straightforward checking shows this actually defines a
	\( P \)-coalgebra structure and that
	this construction \( \overline V
	\mapsto V = \overline V \oplus P(0)\Dual \) is inverse to the
	previous construction \( V \mapsto \overline V \).
\end{proof}

The adjunction relating \( P \)-coalgebras
to \( \overline P \)-coalgebras coming from the inclusion
\( \overline P \subset P \) is actually the following composite
adjunction
\[
	\begin{tikzcd}[column sep = 70]
		\CategoryOfCogebrasOver P
		\arrow[r, shift left=2, "W \mapsto W\oplus P(0)\Dual"]
			& \CategoryOfPointedCogebrasOver P
			\arrow[l, shift left=2,"\Forget"]
			\arrow[r, shift left=2, "V \mapsto \overline V"]
				& \CategoryOfCogebrasOver{\overline P} .
				\arrow[l,
				shift left=2,
				"\overline V
				\mapsto \overline V\oplus P(0)\Dual"]
	\end{tikzcd}
\]

As a consequence, for every chain complex \( X \) the counit map
\( \CofreeCogebraFunctor P X \to X \) is the composition
\[
	\begin{tikzcd}
		\CofreeCogebraFunctor P X
		= \CofreeCogebraFunctor{\overline P} X\oplus P(0)\Dual 
		\rar[two heads]
			& \CofreeCogebraFunctor{\overline P} X
			\rar[two heads]
				& X,
	\end{tikzcd}
\]
which implies that \( P \) is sane.


\section*{Acknowledgements}

The authors would like to thank both Utrecht University and the IBS
Center for Geometry and Physics which provided financial support for
their long term research project on the homotopy theory of linear
coalgebras. In particular they allowed the authors
to meet regularly either in Korea or in the Netherlands.
The first author was supported by the \texttt{NWO};
the second author was supported by \texttt{IBS-R003-D1}.

The authors would like to thank Gabriel C. Drummond-Cole for
helpful discussions and comments.


\bibliography{ms.bbl}

\tocconfig

\end{document}